\newtheorem{theorem}{Theorem}[section]
\newtheorem{lemma}[theorem]{Lemma}
\newtheorem{corollary}[theorem]{Corollary}
\newtheorem{proposition}[theorem]{Proposition}
\newtheorem{conjecture}[theorem]{Conjecture}
\title{{\bf On the $A_{\alpha}$-spectra of graphs}}
\author{ Huiqiu Lin$^{a}$\thanks{Corresponding author. Email:~huiqiulin@126.com (H. Lin).},~~Jie Xue$^{b}$,~~Jinlong Shu$^{b}$
\\
{\footnotesize $^a$Department of Mathematics, East China University of Science and Technology, Shanghai, PR China}\\
{\footnotesize $^b$Department of Computer Science and Technology, East China Normal University, Shanghai, PR China }}
\date{}
\begin{document}
\maketitle

\begin{abstract}
Let $G$ be a graph with adjacency matrix $A(G)$ and let $D(G)$
be the diagonal matrix of the degrees of $G$. For any real $\alpha\in [0,1]$, Nikiforov \cite{VN1} defined the matrix $A_{\alpha}(G)$ as
$$A_{\alpha}(G)=\alpha D(G)+(1-\alpha)A(G).$$
In this paper, we give some results on the eigenvalues of $A_{\alpha}(G)$ for $\alpha>1/2$. In particular, we characterize the graphs with $\lambda_k(A_{\alpha}(G))=\alpha n-1$ for $2\leq k\leq n$. Moreover, we show that $\lambda_n(A_{\alpha}({G}))\geq 2\alpha-1$ if $G$ contains no isolated vertices.

\bigskip
\noindent {\bf AMS Classification:} 05C50, 05C12

\noindent {\bf Key words:} $A_{\alpha}$-matrix; the $k$-th largest $A_{\alpha}$-eigenvalue; the smallest $A_{\alpha}$-eigenvalue
\end{abstract}

\section{Introduction}
All graphs considered here are simple and undirected. Let $G$ be a graph with adjacency matrix $A(G)$, and let $D(G)$ be the diagonal matrix of the degrees of $G$. For any real $\alpha\in [0,1]$, Nikiforov \cite{VN1} defined the matrix $A_{\alpha}(G)$ as
$$A_{\alpha}(G)=\alpha D(G)+(1-\alpha)A(G).$$
It is clear that $A_{\alpha}(G)$ is the adjacency matrix if $\alpha=0$, and $A_{\alpha}(G)$ is essentially equivalent to signless Laplacain matrix if $\alpha=1/2$.

When $\alpha\geq 1/2$, Nikiforov \cite{VN1} proved that $\lambda_{2}(A_{\alpha}(G))\leq \alpha n-1$. This implies that $\lambda_k(A_{\alpha}(G))\leq\lambda_{2}(A_{\alpha}(G))\leq\alpha n-1$ for $k\geq 2$.
de Lima and Nikiforov \cite{LN} showed that $\lambda_k(A_{\frac{1}{2}}(G))=\frac{1}{2}n-1 \ \mbox{for $k\geq2$}$ if and only if $G$ has
either $k$ balanced bipartite components or $k + 1$ bipartite components. Let $\partial_1(G)\geq\cdots\geq\partial_n(G)$ denote the distance signless Laplacian spectrum.
Lin and Das \cite{LD} proved $\partial_n(G) =n-2$ if and only if $G^c$ contains either
a balanced bipartite graph or at least two bipartite components. In this paper, we characterize the graphs with $\lambda_k(A_{\alpha}(G))=\alpha n-1$ for $k\geq 2$ when $\alpha> 1/2$,
these graphs are not the same as those for $\alpha=1/2$.

\begin{theorem}\label{thm2}
Let $G$ be a graph of order $n$. If $\alpha> 1/2$, then
$\lambda_k(A_{\alpha}(G))= \alpha n-1 \ \mbox{for $k\geq2$}$ if and only if $G$ has $k$ vertices of degree $n-1$.
\end{theorem}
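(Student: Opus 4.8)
The plan is to prove the two implications separately, with essentially all the work in the ``only if'' direction. Two preliminary observations will be used throughout. First, since $A_{\alpha}(K_n)=(\alpha n-1)I+(1-\alpha)J$, the eigenvalues of $A_{\alpha}(K_n)$ are $n-1$ (simple) together with $\alpha n-1$ of multiplicity $n-1$; in particular $\lambda_j(A_{\alpha}(K_n))=\alpha n-1$ for every $j\ge 2$. Second, write $N:=A_{\alpha}(K_n)-A_{\alpha}(G)=\sum_{uv\notin E(G)}M_{uv}$ with $M_{uv}=\alpha(e_ue_u^{\top}+e_ve_v^{\top})+(1-\alpha)(e_ue_v^{\top}+e_ve_u^{\top})$; each $M_{uv}$ has rank $2$ with nonzero eigenvalues $1$ and $2\alpha-1$, both positive because $\alpha>1/2$, so $M_{uv}\succeq 0$ and $\ker M_{uv}=\{x:x_u=x_v=0\}$. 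Hence $N\succeq 0$ and, crucially, $\ker N=\bigcap_{uv\notin E(G)}\ker M_{uv}=\{x:\mathrm{supp}(x)\subseteq U\}$, where $U$ is the set of vertices of degree $n-1$; so $\dim\ker N=|U|=:u$. It is precisely this identity that breaks down when $\alpha=1/2$ (the $M_{uv}$ then have rank one), which is why the extremal graphs differ from the $\alpha=1/2$ case. The case $\alpha=1$ is trivial, since $A_{\alpha}(G)=D(G)$ then, so I assume $1/2<\alpha<1$.

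For ``if'', suppose $v_1,\dots,v_k$ have degree $n-1$. Theorem~\ref{thm1} gives $\lambda_k(A_{\alpha}(G))\le\lambda_k(A_{\alpha}(K_n))=\alpha n-1$. For the reverse inequality I would check by a short computation that each $z_i:=e_{v_1}-e_{v_i}$, $2\le i\le k$, is an eigenvector of $A_{\alpha}(G)$ for $\alpha n-1$ (only the universality of $v_1$ and $v_i$ is used), and since the $z_i$ are linearly independent, $\alpha n-1$ is an eigenvalue of multiplicity at least $k-1$. Now $\lambda_1(A_{\alpha}(G))\ge [A_{\alpha}(G)]_{v_1v_1}=\alpha(n-1)>\alpha n-1$ because $\alpha<1$, while $\lambda_j(A_{\alpha}(G))\le\lambda_j(A_{\alpha}(K_n))=\alpha n-1$ for all $j\ge 2$ by Theorem~\ref{thm1}; hence those at least $k-1$ copies of $\alpha n-1$ must be exactly $\lambda_2,\dots,\lambda_k$, so $\lambda_k(A_{\alpha}(G))=\alpha n-1$.

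For ``only if'', set $B:=A_{\alpha}(G)-(\alpha n-1)I=(1-\alpha)J-N$; the hypothesis becomes $\lambda_k(B)=0$. By Weyl's inequality, $\lambda_2(B)\le\lambda_2((1-\alpha)J)+\lambda_1(-N)=-\lambda_n(N)\le 0$, so from $0=\lambda_k(B)\le\lambda_{k-1}(B)\le\cdots\le\lambda_2(B)\le 0$ I conclude $\lambda_2(B)=\cdots=\lambda_k(B)=0$, hence $\dim\ker B\ge k-1$. If $U=\emptyset$ then $N$ is positive definite, so $\lambda_n(N)>0$ and $\lambda_2(B)<0$, a contradiction; therefore $U\ne\emptyset$. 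Next, for any $x\in\ker B$ one has $(1-\alpha)(\mathbf{1}^{\top}x)\,\mathbf{1}=Nx\in\mathrm{range}(N)=(\ker N)^{\perp}=(\mathbb{R}^{U})^{\perp}$; since $\mathbf{1}$ is not orthogonal to $\mathbb{R}^{U}$ (it has entry $1$ at each vertex of $U$, and $U\ne\emptyset$), this forces $\mathbf{1}^{\top}x=0$. So $\ker B\subseteq\mathbf{1}^{\perp}$, on which $Bx=-Nx$; therefore $\ker B=\mathbf{1}^{\perp}\cap\ker N=\mathbf{1}^{\perp}\cap\mathbb{R}^{U}$, a space of dimension $u-1$. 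Combining with $\dim\ker B\ge k-1$ gives $u\ge k$, i.e.\ $G$ has at least $k$ vertices of degree $n-1$.

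The hard part is this necessity direction, and within it the step of computing $\dim\ker B$ exactly rather than merely bounding it. This rests on two facts that make the hypothesis $\alpha>1/2$ indispensable: the identity $\ker N=\mathbb{R}^{U}$ (which needs each $M_{uv}$ nonsingular on its support), and the observation that, once $G$ has a vertex of degree $n-1$, every $(\alpha n-1)$-eigenvector of $A_{\alpha}(G)$ is orthogonal to $\mathbf{1}$ --- equivalently $\mathbf{1}\notin\mathrm{range}(N)$. A minor subtlety to watch is the degenerate case $U=\emptyset$, excluded above via the strict inequality $\lambda_2(B)<0$ furnished by positive definiteness of $N$.
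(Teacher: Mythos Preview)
Your proof is correct and follows essentially the same route as the paper: both exploit $A_{\alpha}(K_n)=A_{\alpha}(G)+N$ with $N=A_{\alpha}(G^c)\succeq 0$ for $\alpha>1/2$, identify $\ker N$ with the coordinate subspace $\mathbb{R}^{U}$ spanned by the universal vertices (the paper does this via the quadratic form $X^{t}A_{\alpha}(G^c)X$, you via the rank-two decomposition $N=\sum M_{uv}$), and conclude that every $(\alpha n-1)$-eigenvector lies in $\mathbf{1}^{\perp}\cap\mathbb{R}^{U}$, which has dimension $|U|-1$. The only packaging difference is that the paper invokes the equality case of Weyl--So (Lemma~\ref{lem2.1}) to produce a common eigenvector and then argues by contradiction, whereas you compute $\ker B$ directly from $\mathrm{range}(N)=(\mathbb{R}^{U})^{\perp}$; your version is slightly more streamlined but not a different argument.
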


It is well-known that $\lambda_n(A_{\frac{1}{2}}(G))=0$ if $G$ is bipartite. So it is interesting to give a lower bounds of $\lambda_n(A_{\frac{1}{2}}(G))$ when $G$ is non-bipartite.
In 2007, Cvetkovi\'{c}, Rowlinson and Simi\'c \cite{CR} proposed the following conjecture:

\begin{conjecture}
Let $G$ be a non-bipartite graph with order $n$. Then $$\lambda_n(A_{\frac{1}{2}}(G))\geq \lambda_n(A_{\frac{1}{2}}(G^*)),$$
equality holds if and only if $G\cong G^*$,
where $G^*$ is the unicyclic graph obtained from a triangle by attaching a
path at one of its end vertices.
\end{conjecture}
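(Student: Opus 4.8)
The plan is to translate the conjecture into the language of the signless Laplacian and then to split the extremal problem into an edge-sparsification step, handled by Theorem~\ref{thm1}, and a shape-optimisation step carried out by perturbing the least eigenvector. Since $A_{1/2}(G)=\tfrac12(D(G)+A(G))=\tfrac12 Q(G)$, where $Q(G)=D(G)+A(G)$ is the signless Laplacian, we have $\lambda_n(A_{1/2}(G))=\tfrac12 q_{\min}(G)$, and the conjecture (read for connected $G$, as it must be, since a bipartite component forces $q_{\min}=0$) is equivalent to the assertion that $G^*$ is the unique minimiser of the least signless Laplacian eigenvalue $q_{\min}$ over all connected non-bipartite graphs of order $n$. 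Throughout I would use the Rayleigh description
$$q_{\min}(G)=\min_{x\neq 0}\frac{\sum_{uv\in E(G)}(x_u+x_v)^2}{\sum_{v}x_v^2},$$
for which $q_{\min}(G)=0$ exactly when $G$ is bipartite.

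First I would reduce to unicyclic graphs. Taking $\alpha=1/2$ and $i=n$ in Theorem~\ref{thm1} shows that deleting an edge does not increase $q_{\min}$, provided the graph stays connected and non-bipartite. Every connected non-bipartite graph contains a spanning connected unicyclic subgraph whose unique cycle is odd: take any spanning tree and add one non-tree edge that closes an odd fundamental cycle. Deleting the remaining edges down to this subgraph does not increase $q_{\min}$, so the minimiser may be sought among connected unicyclic graphs whose unique cycle $C_{2k+1}$ is odd, with trees hung off the cycle.

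The harder, and decisive, step is to prove that within this class $q_{\min}$ is strictly minimised by $G^*$. I would establish two comparison lemmas by perturbing the least eigenvector $x$. \emph{Cycle shortening:} if $2k+1\geq 5$, excising two consecutive cycle vertices and reattaching them as a pendant path produces a unicyclic graph with a shorter odd cycle and strictly smaller $q_{\min}$; iterating brings the cycle down to a triangle. \emph{Grafting towards a path:} if the forest attached to the triangle is not a single path rooted at one triangle vertex, relocating a pendant branch towards the far end of the path strictly decreases $q_{\min}$. Applied repeatedly, these force the extremal graph to be a triangle with a single pendant path, that is $G^*$, while the strict inequalities yield the uniqueness claimed in the equality case.

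The main obstacle is that $q_{\min}$, unlike the spectral radius, carries no Perron--Frobenius guarantee: the least eigenvector need not be positive, so its sign pattern must be pinned down before either comparison lemma can be pushed through. To obtain this control I would pass to the optimal signature. For a connected unicyclic graph with an odd cycle, a $2$-colouring of the spanning tree satisfies every edge except exactly the one cycle edge $e_0$ that closes the odd cycle; writing $S=\diag(s)$ for the corresponding $\pm1$ signing, the gauge identity
$$S\,Q(G)\,S=L(G)+2A_{e_0}$$
holds, where $L(G)$ is the Laplacian and $A_{e_0}$ is the adjacency matrix of the single frustrated edge. Because $q_{\min}(G)=\lambda_{\min}(S\,Q(G)\,S)$ and $q_{\min}(G)$ is small, the transformed eigenvector $Sx$ is a perturbation of the constant Laplacian null vector $\mathbf{1}$; one shows it to be positive and to decay monotonically away from $e_0$, and this positivity is the substitute for the missing Perron structure that makes the two comparison inequalities provable. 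Carrying out these perturbation and monotonicity estimates, and in particular tracking strictness (including the strict drop under edge deletion whenever $G$ has more than $n$ edges) so as to single out $G^*$, is where the bulk of the technical effort lies; the minimal value $q_{\min}(G^*)$ can finally be read off from the eigenvalue recurrence along the pendant path to confirm it is the global minimum.
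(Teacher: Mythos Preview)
The paper does not prove this statement. It is quoted as a conjecture of Cvetkovi\'c, Rowlinson and Simi\'c, and the very next sentence records that it was settled by Cardoso, Cvetkovi\'c, Rowlinson and Simi\'c in \cite{CC}; no argument is reproduced here. So there is no ``paper's own proof'' to compare your proposal against.

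As for the proposal itself: the reduction to odd unicyclic spanning subgraphs via Theorem~\ref{thm1} (equivalently, signless Laplacian edge interlacing) is correct and is exactly the first move in \cite{CC}. Your gauge identity $SQ(G)S=L(G)+2A_{e_0}$ for an odd unicyclic $G$ is also correct and is a clean way to phrase the problem. The weak point is the step where you infer that the transformed least eigenvector $Sx$ is entrywise positive ``because $q_{\min}$ is small and $Sx$ perturbs the Laplacian kernel $\mathbf{1}$''. This is only heuristic: $q_{\min}(G^*)$ is not uniformly small for small $n$, and more importantly $cI-SQS$ is \emph{not} a nonnegative matrix (the frustrated edge $e_0$ contributes a $-1$ off the diagonal), so Perron--Frobenius does not hand you positivity for free. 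Without that positivity, neither the cycle-shortening nor the grafting inequality can be pushed through as you describe; both are asserted rather than proved, and you yourself flag them as ``where the bulk of the technical effort lies''. In \cite{CC} the sign structure of the least $Q$-eigenvector on an odd unicyclic graph is established directly from the eigen-equations and the tree structure, not via a perturbation argument, and that is what makes the subsequent graph transformations go through. Your outline is a plausible roadmap, but as written it does not yet contain the key lemma that controls the eigenvector, and the perturbative justification you offer for it would not stand on its own.
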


One year later, the conjecture was confirmed by Cardoso, Cvetkovi\'{c}, Rowlinson and Simi\'c \cite{CC}. In \cite{LO}, de Lima, Oliveira, de Abreu and Nikiforov proved that $\lambda_n(A_{\frac{1}{2}}(G))\leq \frac{m}{n}-\frac{1}{2}$ and in the same paper, they proposed a conjecture on the lower bound of $\lambda_n(A_{\frac{1}{2}}(G))$.
\begin{conjecture}
Let $G$ be a graph with order $n$ and size $m$. Then $$\lambda_n(A_{\frac{1}{2}}(G))\geq \frac{m}{n-1}-\frac{n-2}{2}.$$
\end{conjecture}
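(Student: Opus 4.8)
~ The first step is to pass to the signless Laplacian: since $A_{1/2}(G)=\tfrac12\big(D(G)+A(G)\big)=\tfrac12Q(G)$, writing $q_n(G):=\lambda_n\big(Q(G)\big)$ the assertion becomes $q_n(G)\ge\frac{2m}{n-1}-(n-2)$. Because $Q(G)$ is positive semidefinite we have $q_n(G)\ge0$, whereas the right-hand side is $\le0$ as soon as $m\le\binom{n-1}{2}$; so only \emph{dense} graphs are in question, and it is natural to measure density by $\bar m:=\binom{n}{2}-m=|E(G^{c})|$, the claim being non-trivial precisely when $\bar m\le n-2$, i.e.\ the complement $G^{c}$ is sparse. (As stated the inequality already fails at $K_{4}-e$ and $C_{4}$, so presumably the intended hypothesis includes $n\ge5$.)

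The natural engine is the complement identity $Q(G)+Q(G^{c})=(n-2)I+J$, with $J$ the all-ones matrix. Writing $Q(G)=(n-2)I+J-Q(G^{c})$ and applying Weyl's inequality together with $\lambda_n(J)=0$ gives
$$q_n(G)\ \ge\ (n-2)-\lambda_1\big(Q(G^{c})\big)\ =\ (n-2)-q_1(G^{c}).$$
A convenient weakening — from $x^{T}Q(G^{c})x=\sum_{ij\in E(G^{c})}(x_i+x_j)^{2}\le 2\sum_i d_i(G^{c})x_i^{2}\le 2\Delta(G^{c})$ for a unit vector $x$ — is $q_n(G)\ge 2\delta(G)-n$, which is sharp at $K_{n}$; this already proves the conjecture whenever $2\delta(G)-n\ge\frac{2m}{n-1}-(n-2)$, that is, whenever $m\le(\delta(G)-1)(n-1)$, so the edge density is not concentrated on a few vertices. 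If instead $G^{c}$ is sparse but $q_1(G^{c})$ is large, one can estimate $q_1(G^{c})=q_1(H_0)$ through the component $H_0$ of $G^{c}$ of largest spectral radius, using a bound such as $q_1(H_0)\le\frac{2|E(H_0)|}{k-1}+k-2$ for $H_0$ on $k$ vertices; this settles the cases in which $k$ stays comfortably below $n$. I expect these pieces, and the reduction, to be routine.

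The obstacle is the dense graphs carrying a few low-degree vertices — equivalently those whose complement contains a near-spanning sparse subgraph — where all the estimates above degrade: already at $K_{5}-e$ the bounds give only $q_n(G)\ge1$ against a target of $\tfrac32$, and for $K_{n-1}$ with a pendant edge the Weyl bound is negative, though the target $\tfrac{2}{n-1}$ is positive. For such graphs one must return to the exact identity
$$q_n(G)=(n-2)+(\mathbf{1}^{T}x)^{2}-x^{T}Q(G^{c})x,$$
where $x$ is a unit least-eigenvector of $Q(G)$, and extract the missing slack from the term $(\mathbf{1}^{T}x)^{2}$ — i.e.\ quantify how far the least eigenvector of an almost-complete graph can be from orthogonal to $\mathbf{1}$. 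This must be done delicately, since the inequality is tight on a genuine family — for instance $K_{4}$, and $K_{5}$ with a matching of size $2$ removed — on which the Weyl step is already an equality, so any slack produced in the generic case has to vanish there. A tempting shortcut, inducting on the number of edges deleted from $K_{n}$ (the base case being $K_{n}$; recall by the $\alpha=\tfrac12$ case of Theorem~\ref{thm1} that $q_n$ only decreases under edge deletion), using that each deletion perturbs $Q(G)$ by a rank-one positive semidefinite matrix, does not close: a single deletion can drop $q_n$ by more than $\tfrac{2}{n-1}$ — already for $K_{4}\to K_{4}-e$ — so such an induction would need a finer invariant than $q_n$ alone. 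Making the variational estimate for $(\mathbf{1}^{T}x)^{2}$ effective, or equivalently pinning down the true extremal graphs sharply enough to support an induction, is the crux — and presumably why the statement is still only conjectured.
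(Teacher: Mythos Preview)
The paper does not prove this statement at all: it is recorded there as a \emph{conjecture} of de Lima, Oliveira, de Abreu, and Nikiforov, and the only further comment is a citation that Guo, Chen, and Yu later established the (for dense graphs) stronger bound $\lambda_n(A_{1/2}(G))\ge \frac{m}{n-2}-\frac{n-1}{2}$. So there is no ``paper's own proof'' to compare your attempt against.

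On your proposal itself: you have correctly rewritten the claim in terms of $q_n(G)$, correctly set up the complement identity $Q(G)+Q(G^{c})=(n-2)I+J$ and the resulting Weyl-type lower bound, and correctly observed that, as literally stated (with no hypothesis on $n$), the inequality already fails for $K_4-e$ and $C_4$. But your argument is, by your own admission, not a proof: the ``dense graph with a few low-degree vertices'' case is exactly where the Weyl step loses too much, and you have not supplied the missing ingredient (a quantitative control on $(\mathbf{1}^{\!T}x)^2$ for the least eigenvector, or an alternative). Your closing sentence, that this ``is presumably why the statement is still only conjectured'', is also mistaken in light of the paper: the conjecture has been settled, just not here. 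If you want a complete argument you should consult Guo--Chen--Yu rather than this paper.
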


Guo, Chen and Yu \cite{GCY} proved a stronger result, $\lambda_n(A_{\frac{1}{2}}(G))\geq \frac{m}{n-2}-\frac{n-1}{2}$. For $\alpha>1/2$,
Nikiforov (\cite{VN1}, Proposition 7) showed that
$A_{\alpha}(G)$ is positive definite if $G$ has no isolated vertices, that is, $\lambda_n(A_{\alpha}({G}))>0$. In the following, we give a lower bound on $\lambda_n(A_{\alpha}({G}))$ when $G$ has no isolated vertices.
\begin{theorem}\label{thm4}
Let $G$ be a graph of order $n$. If $G$ has no isolated vertices and $\frac{1}{2}<\alpha<1$, then
$$\lambda_n(A_{\alpha}({G}))\geq 2\alpha-1,$$
the equality holds if and only if $G$ has a component isomorphic to $K_{2}$.
\end{theorem}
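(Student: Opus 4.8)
The plan is to use the identity $A_\alpha(G) = (1-\alpha)Q(G) + (2\alpha-1)D(G)$, where $Q(G) = D(G)+A(G)$ is the signless Laplacian. Equivalently, for every real vector $x$ one has
$$x^{T}A_\alpha(G)x = (1-\alpha)\sum_{uv\in E(G)}(x_u+x_v)^{2} + (2\alpha-1)\sum_{v\in V(G)}d_v\,x_v^{2},$$
which is checked by expanding each square. Since $G$ has no isolated vertex, $d_v\geq 1$ for all $v$, so $\sum_v d_v x_v^{2}\geq \|x\|^{2}$; as also $1-\alpha\geq 0$, this gives $x^{T}A_\alpha(G)x\geq (2\alpha-1)\|x\|^{2}$ for all $x$. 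Taking $x$ to be a unit eigenvector of $A_\alpha(G)$ for $\lambda_n$ yields $\lambda_n(A_\alpha(G))\geq 2\alpha-1$. (Put differently, $A_\alpha(G)-(2\alpha-1)I = (1-\alpha)Q(G) + (2\alpha-1)(D(G)-I)$ is a sum of two positive semidefinite matrices.)

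For the equality discussion, suppose $\lambda_n(A_\alpha(G)) = 2\alpha-1$ and let $x$ be a unit eigenvector for it. Substituting $x$ into the displayed identity forces both summands to attain their minimum, i.e.\ $(1-\alpha)\sum_{uv\in E(G)}(x_u+x_v)^{2}=0$ and $\sum_{v}d_v x_v^{2}=\|x\|^{2}$. Since $1/2<\alpha<1$, the first equality gives $x_u=-x_v$ on every edge, while the second gives $\sum_v (d_v-1)x_v^{2}=0$, hence $x_v=0$ for every $v$ with $d_v\geq 2$. Because $x\neq 0$ there is a vertex $w$ with $x_w\neq 0$; then $d_w=1$, its unique neighbour $y$ satisfies $x_y=-x_w\neq 0$, so $d_y=1$ too, and therefore $\{w,y\}$ spans a connected component of $G$ isomorphic to $K_2$.

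Conversely, if $G$ has a component on $\{w,y\}$ isomorphic to $K_2$, I would take the vector $x$ with $x_w=1$, $x_y=-1$ and $x_v=0$ elsewhere and verify directly that $A_\alpha(G)x=(2\alpha-1)x$; hence $2\alpha-1$ is an eigenvalue of $A_\alpha(G)$, and combined with the lower bound just proved it must be $\lambda_n(A_\alpha(G))$.

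The computation is short, and I expect the only delicate points to lie in the equality analysis: one genuinely needs $\alpha<1$ to deduce $x_u=-x_v$ across edges (for $\alpha=1$ the matrix reduces to $D(G)$ and the characterization would fail, e.g.\ for $P_3$), and one must notice that a nonzero eigenvector entry can only occur at a degree-one vertex, which then forces its neighbour to also have degree one and peels off a $K_2$. No serious obstacle is anticipated beyond this bookkeeping.
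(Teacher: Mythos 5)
Your proof is correct, and it takes a genuinely different route from the paper's. The paper first reduces to connected components, passes to a spanning tree of each component via Theorem \ref{thm1} (edge addition does not decrease any $A_\alpha$-eigenvalue, proved with Weyl's inequality), and then establishes the bound for trees by induction on the order, with explicit eigenvalue computations for $K_{1,n-1}$ and $P_4$ and a common-eigenvector argument in the equality case. You instead observe directly that $A_\alpha(G)-(2\alpha-1)I=(1-\alpha)Q(G)+(2\alpha-1)\bigl(D(G)-I\bigr)$ is a sum of positive semidefinite matrices when $G$ has no isolated vertex, which gives the lower bound in one line from the quadratic form $X^{t}A_{\alpha}(G)X=(2\alpha-1)\sum_{u}d_{u}x_{u}^{2}+(1-\alpha)\sum_{uv\in E}(x_u+x_v)^{2}$ already listed in Section~2 of the paper; the equality analysis then falls out of the kernel conditions ($x_u=-x_v$ across edges and $x_v=0$ at vertices of degree at least $2$), peeling off a $K_2$ component. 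Your approach is shorter, avoids induction and the reduction to trees entirely, and makes the equality characterization transparent; the paper's route has the side benefit of Lemma \ref{le1+}, which gives the strict inequality $\lambda_n(A_\alpha(T))>2\alpha-1$ for all trees of order at least $3$, a statement of independent interest. You are also right to flag that the equality characterization genuinely requires $\alpha<1$ (at $\alpha=1$ the matrix is $D(G)$ and, e.g., $P_3$ attains $\lambda_n=1=2\alpha-1$ without a $K_2$ component); the paper's Lemma \ref{le1+} imposes $\frac{1}{2}<\alpha<1$ even though the theorem statement only says $\alpha>\frac{1}{2}$, so your caveat matches the intended hypothesis.
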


The rest of the paper is organized as follows. In Section 2, we study the $k$-th largest $A_{\alpha}$-eigenvalue, and give the proof of Theorem \ref{thm2}. In Section 3, we present the proof of Theorem \ref{thm4}, and we also give some results on the smallest $A_{\alpha}$-eigenvalue.

\section{The $k$-th largest eigenvalue of $A_{\alpha}(G)$}

Let $G$ be a graph with vertex set $V(G)$ and edge set $E(G)$. The \emph{degree} of a vertex $v\in V(G)$ is denoted by $d_{G}(v)$. We use $N(v)$ to denote the set of vertices of $G$ which are adjacent to $v$. We first list some fundamental properties of the matrix $A_{\alpha}(G)$,
which can be found in \cite{VN1}.  Let $x$ be a vector on the vertices of $G$. We use $x(v)$ to denote the entry of $x$ corresponding to the vertex $v\in V(G)$. Suppose that $\lambda$ is an eigenvalue of $A_{\alpha}(G)$. If $x$ is an eigenvector of $A_{\alpha}(G)$ with respect to $\lambda$, then the eigenequations for $A_{\alpha}(G)$ can be written as
$$\lambda x(v)=\alpha d_{G}(v)x(v)+(1-\alpha)\sum_{u\in N(v)}x(u),$$
where $v\in V(G)$. The quadratic form $\langle A_{\alpha}(G)x,x\rangle$ can be represented in several ways:
\begin{eqnarray*}
\langle A_{\alpha}(G)x,x\rangle&=&\sum_{uv\in E(G)}(\alpha x(u)^{2}+2(1-\alpha)x(u)x(v)+\alpha x(v)^{2}),\\
\langle A_{\alpha}(G)x,x\rangle&=&(2\alpha-1)\sum_{u\in V(G)}x(u)^{2}d_{G}(u)+(1-\alpha)\sum_{uv\in E(G)}(x(u)+x(v))^{2},\\
\langle A_{\alpha}(G)x,x\rangle&=&\alpha\sum_{u\in V(G)}x(u)^{2}d_{G}(u)+2(1-\alpha)\sum_{uv\in E(G)}x(u)x(v).
\end{eqnarray*}

For convenience we state below the complete theorem of Weyl and So.

\begin{theorem}\rm{(\cite{SW})}\label{lem2.1} Let $A$ and $B$ be $n\times n$ Hermitian matrices and $C = A+B$. Then
$$\lambda_i(C) \leq \lambda_j(A) + \lambda_{i-j+1}(B) (n \geq i \geq j \geq 1),$$
$$\lambda_i(C) \geq \lambda_j(A) + \lambda_{i-j+n}(B) (1 \leq i \leq j \leq n).$$
In either of these inequalities equality holds if and only if there exists a nonzero $n$-vector that is an eigenvector to each of the three
involved eigenvalues.
\end{theorem}

\begin{proposition}\rm{(\cite{LN})}\label{prop}
  Let $2\leq k<n$ and $A$ and $B$ be self-adjoint operators of order $n$. If for every $s=2,\ldots, k$,
  $$\lambda_{s}(A)+\lambda_{n}(B)=\lambda_{s}(A+B),$$
  then there exist $k-1$ nonzero orthogonal $n$-vectors $x_{1},\ldots,x_{k-1}$ such that
  $$Ax_{s-1}=\lambda_{s}(A)x_{s-1}, Bx_{s-1}=\lambda_{n}(B)x_{s-1}~ \text{and}~(A+B)x_{s-1}=\lambda_{s}(A+B)x_{s-1}$$
  for every $s=2,\ldots,k.$
\end{proposition}

Let $\alpha\geq 1/2$.  We use $G-e$ to denote the graph obtained from $G$ by deleting an edge $e\in E(G)$. Thus, $A_{\alpha}(G)=A_{\alpha}(G-e)+N$, where $N$ is the $A_{\alpha}$-matrix of a graph containing only one edge. In \cite{VN1}, it was proved that the $A_{\alpha}$-matrix is positive semi-definite if $\alpha\geq 1/2$. Hence, the matrices $A_{\alpha}(G)$, $A_{\alpha}(G-e)$ and $N$ are all positive semi-definite.
By using \emph{monotonicity theorem} (see \cite{HR}, Corollary 4.3.3), the following result holds.

\begin{proposition}\label{prop1}
  Let $G$ be a graph of order $n$. If $e\in E(G)$ and $\alpha\geq 1/2$, then
$$\lambda_i(A_{\alpha}(G))\geq\lambda_i(A_{\alpha}(G-e))$$
for $1\leq i\leq n$.
\end{proposition}

If $S\subseteq V(G)$, then we use $G[S]$ to denote the subgraph of $G$ induced by $S$.

\begin{proposition}\label{prop4.1}
Let $G$ be a graph with $A_{\alpha}(G)$ and $0\leq \alpha \leq1$. Let $S\subseteq V(G)$ and $|S|=k$.
Suppose that $d_{G}(w)=d$ for each vertex $w\in S$, and $N(v)\backslash \{u\}=N(u)\backslash \{v\}$ for any two vertices $u,v\in S$. Then we have the following statements.

\noindent $(1)$ If $G[S]$ is a clique, then $(d+1)\alpha-1$ is an eigenvalue of $A_{\alpha}(G)$ with multiplicity at least $k-1$.

\noindent $(2)$ If $G[S]$ is an independent set, then $d\alpha$ is an eigenvalue of $A_{\alpha}(G)$ with multiplicity at least $k-1$.
\end{proposition}
\begin{proof}
Let $S=\{v_{1},v_{2},\ldots,v_{k}\}$. Clearly, $d_{G}(v_{1})=\cdots=d_{G}(v_{k})=d$. Let $x_{1},x_{2},\ldots,x_{k-1}$ be vectors such that
$$\left\{
\begin{array}{l}
x_{i}(v_{1})=1,\\
x_{i}(v_{i+1})=-1,\\
x_{i}(v)=0  ~~\text{if}~~ v\in V(G)\backslash \{v_{1},v_{i+1}\},
\end{array}
\right.$$
 for $i=1,\ldots,k-1$. Suppose that $G[S]$ is a clique. One can easily obtain that
$$A_{\alpha}(G)x_i=((d_{G}(v_{1})+1)\alpha-1,0,\ldots,0,1-(d_{G}(v_{i})+1)\alpha,0,\ldots,0)^{'}=((d+1)\alpha-1)x_i,$$
for $i=1,\ldots,k-1$. Hence, $(d+1)\alpha-1$ is an eigenvalue of $A_{\alpha}(G)$ and $x_1,\ldots,x_{k-1}$ are eigenvectors of $A_{\alpha}(G)$ corresponding to $(d+1)\alpha-1$. Moreover, since $x_1,\ldots,x_{k-1}$ are linearly independent,
the multiplicity of $(d+1)\alpha-1$ is at least $k-1$.

When $G[S]$ is an independent set, we have
$$A_{\alpha}(G)x_i=(d_{G}(v_{1})\alpha,0,\ldots,0,-d_{G}(v_{i})\alpha,0,\ldots,0)^{'}=d\alpha x_i,$$
for $i=1,2,\ldots,k-1$. Since $x_1,\ldots,x_{k-1}$ are linearly independent, it follows that $d\alpha$ is an eigenvalue of $A_{\alpha}(G)$ with multiplicity at least $k-1$.
Thus we complete the proof. \hspace*{\fill}$\Box$
\end{proof}

The graph $K_s\vee (n-s)K_1$ is called \emph{complete split graph}, denoted by $CS_{s,n-s}$. According to Proposition \ref{prop4.1} and [\cite{VN1}, Proposition 37], we determine all $A_{\alpha}$-eigenvalues of $CS_{s,n-s}$.
\begin{corollary}\label{cor1}
The $A_{\alpha}$-spectrum of $CS_{s,n-s}$ contains $n\alpha -1$ with multiplicity $s-1$, $s\alpha$ with multiplicity $n-s-1$
and the remaining two $A_{\alpha}$-eigenvalues are $$\frac{n\alpha+s-1\pm \sqrt{(n\alpha+s-1)^2-4s\alpha(s-1)+4s(n-s)(1-2\alpha)}}{2}.$$
\end{corollary}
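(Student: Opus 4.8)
The final statement to prove is Corollary \ref{cor1}, which computes the full $A_\alpha$-spectrum of the complete split graph $CS_{a,n-a} = K_a \vee (n-a)K_1$.

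The plan is to combine Proposition \ref{prop4.1} with a quotient-matrix (equitable partition) argument. First I would identify the natural partition of $V(CS_{a,n-a})$ into $V_1$, the $a$ vertices of the clique $K_a$, and $V_2$, the $n-a$ independent vertices; every vertex of $V_1$ has degree $n-1$ and every vertex of $V_2$ has degree $a$, and all vertices in $V_1$ share the same neighborhood in $V_2$ (namely all of $V_2$), so Proposition \ref{prop4.1}(1) applies with $d = n-1$, $k = a$, giving the eigenvalue $(d+1)\alpha - 1 = n\alpha - 1$ with multiplicity at least $a - 1$. Symmetrically, the vertices of $V_2$ form an independent set all sharing the same neighborhood (all of $V_1$), each of degree $a$, so Proposition \ref{prop4.1}(2) gives $d\alpha = a\alpha$ with multiplicity at least $n - a - 1$.

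Next I would account for the remaining two eigenvalues. Since the partition $\{V_1, V_2\}$ is equitable for $A_\alpha(CS_{a,n-a})$ — every vertex in $V_1$ has $a-1$ neighbors in $V_1$ and $n-a$ in $V_2$, every vertex in $V_2$ has $0$ neighbors in $V_2$ and $a$ in $V_1$ — the constant-on-parts vectors span a $2$-dimensional $A_\alpha$-invariant subspace, and the corresponding quotient matrix is
\[
B = \begin{pmatrix} \alpha(n-1) + (1-\alpha)(a-1) & (1-\alpha)(n-a) \\ (1-\alpha)a & \alpha a \end{pmatrix}.
\]
Its eigenvalues are the two remaining ones, and they are orthogonal to (hence distinct in the invariant-subspace decomposition from) the eigenvectors $X_i = e_1 - e_{i+1}$ produced above, which are constant on each part's complement and sum to zero. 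I would then compute $\operatorname{tr}(B) = n\alpha + a - 1$ and $\det(B)$, so that the two eigenvalues are $\tfrac{1}{2}\bigl(\operatorname{tr}(B) \pm \sqrt{\operatorname{tr}(B)^2 - 4\det(B)}\bigr)$; expanding $\operatorname{tr}(B)^2 - 4\det(B)$ should reproduce the radicand $(n\alpha + a - 1)^2 - 4a^2\alpha + 4a\alpha + 4a(n-a) - 8a(n-a)\alpha$ stated in the corollary. Finally I would check the multiplicities add up: $(a-1) + (n-a-1) + 2 = n$, confirming we have the entire spectrum and that the "at least" in Proposition \ref{prop4.1} is in fact equality (barring coincidental overlaps with the quotient eigenvalues, which do not affect the spectrum as a multiset).

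The only real obstacle is the bookkeeping in the determinant computation — one must be careful that $\det(B)$ is computed with the correct off-diagonal entries $(1-\alpha)(n-a)$ and $(1-\alpha)a$ rather than the raw adjacency counts, and then patiently match terms with the (somewhat unsimplified) radicand in the statement. This is routine algebra; no conceptual difficulty arises, and the result is essentially immediate once the equitable partition and Proposition \ref{prop4.1} are in place.
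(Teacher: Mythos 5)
Your proposal is correct and follows essentially the same route as the paper, which simply invokes Proposition \ref{prop4.1} and leaves the remaining two eigenvalues to the reader; your equitable-partition quotient matrix $B$ with $\operatorname{tr}(B)=n\alpha+a-1$ and $\det(B)=a^2\alpha-a\alpha-a(n-a)+2a(n-a)\alpha$ does reproduce the stated radicand exactly. The dimension count $(a-1)+(n-a-1)+2=n$, together with the orthogonality of the zero-sum vectors supported on each part to the part-indicator vectors, correctly certifies that this is the whole spectrum.
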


At the end of this section, we give the proof of Theorem \ref{thm2}.

\noindent{\bf Proof of Theorem \ref{thm2}.} If $G$ has at least $k$ vertices of degree $n-1$, then it follows from Proposition \ref{prop4.1} that $n\alpha-1$ is an eigenvalue of $A_{\alpha}(G)$ with multiplicity at least $k-1$. Since $CS_{k,n-k}$ is a subgraph of $G$, we have $\lambda_1(A_{\alpha}(G))\geq \lambda_1(A_{\alpha}(CS_{k,n-k}))>n\alpha -1$, and so $\lambda_k(A_{\alpha}(G))=\alpha n-1.$ On the other hand, we assume that $\lambda_k(A_{\alpha}(G))=\alpha n-1$. Hence,
$\lambda_{2}(A_{\alpha}(G))=\cdots=\lambda_{k}(A_{\alpha}(G))=\alpha n-1.$
 If $k=n$, then $G\cong K_{n}$ (otherwise, $\lambda_n(A_{\alpha}(G))\leq \lambda_n(A_{\alpha}(CS_{n-2,2})<n\alpha -1)$). Suppose $k\leq n-1$. Let $G^{c}$ be the complement of $G$. Note that $$A_{\alpha}(G)+A_{\alpha}(G^c)=A_{\alpha}(K_n).$$
 By Theorem \ref{lem2.1}, we have $$\lambda_i(A_{\alpha}(G))+\lambda_n(A_{\alpha}(G^c))\leq \lambda_i(A_{\alpha}(K_n)) \ \mbox{for $2\leq i\leq n$}.$$
 Since $\lambda_k(A_{\alpha}(G))=\alpha n-1$ and $\lambda_k(A_{\alpha}(K_{n}))=\alpha n-1$, it follows that $\lambda_n(A_{\alpha}(G^c))=0$. Hence,
 $$\lambda_i(A_{\alpha}(G))+\lambda_n(A_{\alpha}(G^c))=\lambda_i(A_{\alpha}(K_n))$$
 for every $i=2,\ldots,k$. According to Proposition \ref{prop}, we can see that there exist $k-1$ nonzero orthogonal $n$-vectors $x_{1},\ldots,x_{k-1}$ such that
  $$A_{\alpha}(G)x_{i-1}=\lambda_{i}(A_{\alpha}(G))x_{i-1}, A_{\alpha}(G^c)x_{i-1}=\lambda_{n}(A_{\alpha}(G^c))x_{i-1}~ \text{and}~A_{\alpha}(K_n)x_{i-1}=\lambda_{i}(A_{\alpha}(K_n))x_{i-1}$$
  for every $i=2,\ldots,k.$ Let $V^{*}=\{v\in V(G): d_{G}(v)=n-1\}$. Since $\lambda_n(A_{\alpha}(G^c))=0$, $G^{c}$ contains isolated vertex, and so $V^{*}\neq \emptyset$. Suppose $V^{*}=\{v_{1},v_{2},\ldots,v_{t}\}$. In the following, we only need to show $t\geq k$. For $1\leq i\leq k-1$, since $A_{\alpha}(G^c)x_{i}=\lambda_{n}(A_{\alpha}(G^c))x_{i}$ and $\lambda_n(A_{\alpha}(G^c))=0$, we have
  $$0=\langle A_{\alpha}(G^c)x_{i}, x_{i}\rangle=(2\alpha-1)\sum_{u\in V(G^c)}d_{G^c}(u)x_{i}(u)^2+(1-\alpha)\sum_{uv\in E(G^c)}(x_{i}(u)+x_{i}(v))^2.$$
 Since $\alpha>1/2$, one can see that $x_{i}(u)=0$ for all $u\in V(G)\backslash V^{*}.$ Moreover, since $x_{i}$ is also an eigenvector of $\lambda_{i+1}(A_{\alpha}(K_n))=\alpha n-1$, we have $\textrm{1}^{'}x_{i}=0$ and hence $\sum_{v\in V^{*}}x_{i}(v)=0$, that is, $x_{i}(v_1)=-(x_{i}(v_2)+\cdots+x_{i}(v_t))$. Suppose that $y_{1},\ldots,y_{t-1}$ are $n$-vectors such that $y_{i}(v_{1})=-1$, $y_{i}(v_{i+1})=1$ and $y_{i}(v)=0$ if $v\in V(G)\backslash \{v_{1},v_{i+1}\}$, for $i=1,\ldots,t-1$. Thus, it is easy to see that
 $$x_{i}=x_{i}(v_{2})y_{1}+x_{i}(v_{3})y_{2}+\cdots+x_{i}(v_{t})y_{t-1}$$
 for $i=1,\ldots,k-1$. Therefore, $x_{1},\ldots,x_{k-1}$ can be represented by using $y_{1},\ldots,y_{t-1}$. Since these two vector families are both linearly independent, it follows that $t\geq k$, thus we complete the proof.\hspace*{\fill}$\Box$

\section{The smallest eigenvalue of $A_{\alpha}(G)$}

In this section, we study the smallest eigenvalue of $A_{\alpha}(G)$. First, we will give the proof of Theorem \ref{thm4}. Before proceeding, the following lemma is needed.
\begin{lemma}\label{le1+}
Let $T$ be a tree of order $n\geq 2$. If  $\frac{1}{2}<\alpha<1$, then $\lambda_n(A_{\alpha}({T}))\geq 2\alpha-1$ with equality if and only if $T\cong K_{2}$.
\end{lemma}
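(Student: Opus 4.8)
The plan is to induct on the order $n$ of the tree $T$. The base case $n=2$ forces $T\cong K_2$, and $A_\alpha(K_2)=\begin{pmatrix}\alpha & 1-\alpha\\ 1-\alpha & \alpha\end{pmatrix}$ has eigenvalues $1$ and $2\alpha-1$, so $\lambda_n(A_\alpha(K_2))=2\alpha-1$; this also establishes the equality case from one side. For the inductive step, let $T$ have $n\ge 3$ vertices. The strategy is to locate a pendant vertex $v$ whose neighbour $u$ still has a useful structure, delete $v$ (and, if necessary, a small piece of $T$), and relate $\lambda_n(A_\alpha(T))$ to the smallest eigenvalue of $A_\alpha$ on a smaller tree via interlacing. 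Since a tree on $n\ge 3$ vertices cannot be a disjoint union of $K_2$'s, it suffices to show the strict inequality $\lambda_n(A_\alpha(T))>2\alpha-1$.

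The cleanest route is to argue by contradiction: suppose $\lambda_n(A_\alpha(T))\le 2\alpha-1$ for some tree $T$ on $n\ge 3$ vertices, and take a minimal counterexample. Take a unit eigenvector $X=(x_1,\dots,x_n)^t$ for $\lambda_n(A_\alpha(T))$. Using the quadratic-form identity from Section 2,
\[
X^tA_\alpha(T)X=(2\alpha-1)\sum_{u\in V(T)}d_u x_u^2+(1-\alpha)\sum_{\{u,v\}\in E(T)}(x_u+x_v)^2,
\]
and the fact that $\sum_u d_u x_u^2\ge \sum_u x_u^2=1$ for any graph with no isolated vertices (each $d_u\ge 1$), we get
\[
\lambda_n(A_\alpha(T))=X^tA_\alpha(T)X\ge (2\alpha-1)+(1-\alpha)\sum_{\{u,v\}\in E(T)}(x_u+x_v)^2.
\]
Hence the assumption $\lambda_n(A_\alpha(T))\le 2\alpha-1$ forces $x_u+x_v=0$ for every edge $\{u,v\}$ of $T$, and also forces $\sum_u d_u x_u^2=1$, i.e. $x_u=0$ at every non-leaf vertex. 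Because $T$ is connected and bipartite with a well-defined $2$-colouring, the condition $x_u=-x_v$ on edges means $X$ is determined up to sign by its value on one class; but $x_u=0$ at every internal vertex then propagates (any leaf is adjacent to an internal vertex once $n\ge 3$, except in the degenerate case $T=K_2$), forcing $X=0$, contradicting $\|X\|=1$. This simultaneously yields the strict inequality for $n\ge 3$ and pins down the equality case: equality $\lambda_n=2\alpha-1$ requires every edge to satisfy $x_u+x_v=0$ with $X\ne 0$, which (given no isolated vertices and $T$ a tree) is possible only when every component is $K_2$, i.e. $T\cong K_2$.

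The main obstacle I anticipate is the very last propagation argument: one must be careful that "$x_u=0$ at all internal vertices" together with "$x_u=-x_v$ on edges" genuinely kills $X$. The subtlety is a star $K_{1,n-1}$, where the centre is the only internal vertex: there $x_{\text{centre}}=0$ is forced, but the leaves could a priori be nonzero — except that the edge condition $x_{\text{leaf}}+x_{\text{centre}}=0$ then forces each leaf to be $0$ as well, so the argument still closes. For a general tree one walks along a path from any leaf: its neighbour is internal (value $0$), the next edge forces the leaf's value to be $0$, and more generally every vertex lies on some edge to an internal vertex, so all coordinates vanish. Thus the only graphs surviving are disjoint unions of single edges, and connectivity leaves just $K_2$. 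Once this is in hand the lemma follows, and it is exactly the $\alpha<1$ hypothesis (so that $1-\alpha>0$) that makes the penalty term $(1-\alpha)\sum(x_u+x_v)^2$ strictly positive unless all $x_u+x_v$ vanish.
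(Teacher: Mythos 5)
Your proof is correct, but it takes a genuinely different route from the paper. The paper proceeds by induction on $n$: it computes $\lambda_n(A_\alpha(K_{1,n-1}))$ explicitly for the star case, and otherwise deletes a non-pendant edge to write $T-e=T_1\cup T_2$, then handles three cases via the inductive hypothesis and Weyl's inequality (including a careful common-eigenvector analysis in the case $|V(T_2)|=2$). You instead give a direct variational argument: plugging a unit eigenvector into the identity $X^tA_\alpha X=(2\alpha-1)\sum_u d_ux_u^2+(1-\alpha)\sum_{\{u,v\}\in E}(x_u+x_v)^2$ and using $d_u\ge 1$ immediately yields $\lambda_n\ge 2\alpha-1$, and the equality analysis ($x_u=0$ at every vertex of degree $\ge 2$, $x_u+x_v=0$ on every edge) correctly propagates to $X=0$ unless the component is a single edge. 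Two remarks. First, your framing as ``induction on $n$ / minimal counterexample'' is vestigial: the argument you actually run is self-contained and never invokes an inductive hypothesis, so you should drop that scaffolding. Second, your approach is strictly stronger than what the lemma asks: it applies verbatim to any graph with no isolated vertices, not just trees, so it proves Theorem 1.4 (the bound $\lambda_n(A_\alpha(G))\ge 2\alpha-1$ with equality iff $G$ has a $K_2$ component) in one stroke, bypassing both the spanning-tree reduction and the case analysis of the paper's Lemma 4.1. That is a real simplification of this part of the paper, and it isolates exactly where the hypotheses enter: $\alpha>1/2$ makes the degree term dominate and $\alpha<1$ makes the $(x_u+x_v)^2$ penalty nonnegative and its vanishing forced at equality.
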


\begin{proof}
Clearly, $\lambda_{2}(A_{\alpha}(K_2))=2\alpha-1$. It suffices to show that $\lambda_n(A_{\alpha}({T}))> 2\alpha-1$ if $n\geq 3$. We prove this by induction on the order $n$. Recall that the smallest eigenvalue of star $K_{1,n-1}$ is $$\lambda_n(A_{\alpha}(K_{1,n-1}))=\frac{1}{2}(\alpha n-\sqrt{\alpha^2n^2+4(n-1)(1-2\alpha)}).$$ It is easy to check that $\frac{1}{2}(\alpha n-\sqrt{\alpha^2n^2+4(n-1)(1-2\alpha)})>2\alpha-1,$ if $\frac{1}{2}<\alpha<1$ and $n\geq 3$. Hence, the result follows when $T$ is a star. So, in the following we may assume that $T\ncong K_{1,n-1}$. Then there exists a non-pendent edge $e$ such that $T-e=T_{1}\cup T_{2}$ and $|V(T_{1})|\geq |V(T_{2})|\geq 2$.

\noindent{\bf{Case 1.}} $|V(T_{1})|\geq 3$ and $|V(T_{2})|\geq 3$.

By the induction hypothesis, we have $\lambda_{|V(T_{1})|}(A_{\alpha}(T_{1}))>2\alpha-1$ and $\lambda_{|V(T_{2})|}(A_{\alpha}(T_{2}))>2\alpha-1$. Note that $\lambda_{n}(A_{\alpha}(T-e))=\min\{\lambda_{|V(T_{1})|}(A_{\alpha}(T_{1})),\lambda_{|V(T_{2})|}(A_{\alpha}(T_{2}))\}$. According to Proposition \ref{prop1}, it follows that
$$\lambda_{n}(A_{\alpha}(T))\geq \lambda_{n}(A_{\alpha}(T-e))=\min\{\lambda_{|V(T_{1})|}(A_{\alpha}(T_{1})),\lambda_{|V(T_{2})|}(A_{\alpha}(T_{2}))\}>2\alpha-1,$$
as required.

\noindent{\bf{Case 2.}} $|V(T_{1})|= 2$ and $|V(T_{2})|= 2$.

Thus $T\cong P_{4}$. By a simple calculation, we have $$\lambda_{4}(A_{\alpha}(P_{4}))=\min\{\alpha+\frac{1}{2}-\frac{1}{2}\sqrt{4\alpha^2-8\alpha+5}, 2\alpha-\frac{1}{2}-\frac{1}{2}\sqrt{8\alpha^2-12\alpha+5}\}.$$
It is easy to check that both $\alpha+\frac{1}{2}-\frac{1}{2}\sqrt{4\alpha^2-8\alpha+5}>2\alpha-1$ and $2\alpha-\frac{1}{2}-\frac{1}{2}\sqrt{8\alpha^2-12\alpha+5}>2\alpha-1$, hence $\lambda_{4}(A_{\alpha}(P_{4}))>2\alpha-1$.

\noindent{\bf{Case 3.}} $|V(T_{1})|\geq 3$ and $|V(T_{2})|= 2$.

That is $T-e=T_{1}\cup K_{2}$. We may assume that $e=uv$ and $K_{2}=vw$. Hence
$$A_{\alpha}(T)=A_{\alpha}(T_{1}\cup K_{2})+M$$
where $M=A_{\alpha}(K_{2}\cup (n-2)K_1)$. The eigenvalues of $M$ are $1,2\alpha-1,0,\ldots,0$. According to Theorem \ref{lem2.1}, we have
$$\lambda_{n}(A_{\alpha}(T))\geq\lambda_{n}(A_{\alpha}(T_{1}\cup K_{2})) +\lambda_{n}(M).$$
By the induction hypothesis, we have $\lambda_{|V(T_{1})|}(A_{\alpha}(T_{1}))>2\alpha-1$, then $\lambda_{n}(A_{\alpha}(T_{1}\cup K_{2}))=\lambda_{2}(A_{\alpha}(K_{2}))=2\alpha-1$. And since $\lambda_{n}(M)=0$, therefore
$$\lambda_{n}(A_{\alpha}(T))\geq\lambda_{n}(A_{\alpha}(T_{1}\cup K_{2})) +\lambda_{n}(M)=2\alpha-1.$$
If $\lambda_{n}(A_{\alpha}(T))=2\alpha-1$, then it follows from Theorem \ref{lem2.1} that $\lambda_{n}(A_{\alpha}(T))$, $\lambda_{n}(A_{\alpha}(T_{1}\cup K_{2}))$ and $\lambda_{n}(M)$ share a common eigenvector (say $x$), that is,
 $$A_{\alpha}(T)x=\lambda_{n}(A_{\alpha}(T))x, ~A_{\alpha}(T_{1}\cup K_{2})x=\lambda_{n}(A_{\alpha}(T_{1}\cup K_{2}))x~~ \text{and}~ ~Mx=\lambda_{n}(M)x.$$
 Since $Mx=\lambda_{n}(M)x$ and $\lambda_{n}(M)=0$, we have $\alpha x(u)+(1-\alpha)x(v)=0$ and $\alpha x(v)+(1-\alpha)x(u)=0$, this implies that $x(u)=x(v)=0$. Since $A_{\alpha}(T_{1}\cup K_{2})x=\lambda_{n}(A_{\alpha}(T_{1}\cup K_{2}))x$ and $\lambda_{n}(A_{\alpha}(T_{1}\cup K_{2}))=2\alpha-1$, we have $(2\alpha-1)x(v)=\alpha x(v)+(1-\alpha)x(w)$, then $x(w)=0$. Let $y$ be an $(n-2)$-vector obtained from $x$ by deleting the entries $x(v),x(w)$. Since $A_{\alpha}(T_{1}\cup K_{2})x=(2\alpha-1)x$ and $x(v)=x(w)=0$, it is easy to check that $A_{\alpha}(T_{1})y=(2\alpha-1)y$. It leads to that $2\alpha-1$ is an eigenvalue of $A_{\alpha}(T_{1})$, this is contrary to the induction hypothesis. Hence $\lambda_{n}(A_{\alpha}(T))>2\alpha-1$, which completes the proof of this lemma.\hspace*{\fill}$\Box$
\end{proof}

\noindent{\bf Proof of Theorem \ref{thm4}.} Suppose that $G$ contains $k$ components, say, $G_1,\ldots,G_k$. Then $$\lambda_n(A_{\alpha}(G))=\min\{\lambda_{|V(G_{i})|}(A_{\alpha}(G_i))|i=1,\ldots,k\}.$$
Let $T_{i}$ be a spanning tree of $G_i$ for $1\leq i\leq k$. It follows from Proposition \ref{prop1} that $$\lambda_{|V(G_{i})|}(A_{\alpha}(G_i))\geq\lambda_{|V(T_{i})|}(A_{\alpha}(T_i)).$$
Now the theorem follows by Lemma \ref{le1+}.\hspace*{\fill}$\Box$

The smallest $A_{\alpha}$-eigenvalue was also studied in \cite{VN1}. In particular, the author presented the following problem: when $\alpha\in (0,1/2)$, determine how small the least $A_{\alpha}$-eigenvalue can be for a graph of given order (see \cite{VN1} Problem 30). When $1/2<\alpha<1$, it is interesting to find out which connected graph with order $n$ minimizes the smallest eigenvalue of $A_{\alpha}(G)$. Based on our numerical experiments, we propose the following conjecture.
\begin{conjecture}
Let $G$ be a connected graph of order $n$. If $\frac{1}{2}<\alpha<1$, then
$$\lambda_n(A_{\alpha}({G}))\geq \lambda_n(A_{\alpha}({K_{1,n-1}})),$$
the equality holds if and only if $G\cong K_{1,n-1}$.
\end{conjecture}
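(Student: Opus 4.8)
The plan is to reduce the statement to trees via Theorem~\ref{thm1}, and then to prove that among all trees on $n$ vertices the star $K_{1,n-1}$ is the unique minimizer of $\lambda_n(A_\alpha)$. \textbf{Reduction to trees.} If $T$ is any spanning tree of $G$, then $G$ is obtained from $T$ by adding edges, so Theorem~\ref{thm1} gives $\lambda_n(A_\alpha(G))\ge\lambda_n(A_\alpha(T))$. Hence it suffices to show $\lambda_n(A_\alpha(T))\ge\lambda_n(A_\alpha(K_{1,n-1}))$ for every tree $T$ on $n$ vertices, with equality only for $T\cong K_{1,n-1}$. Granting that, the equality analysis for a general connected $G$ follows: equality forces some spanning tree of $G$ to be a star, so either $G\cong K_{1,n-1}$ or $G\supseteq K_{1,n-1}+e$ for some $e\notin E(K_{1,n-1})$; in the second case $\lambda_n(A_\alpha(G))\ge\lambda_n(A_\alpha(K_{1,n-1}+e))$, and writing $A_\alpha(K_{1,n-1}+e)=A_\alpha(K_{1,n-1})+M$ (so $Spec(M)=\{1,2\alpha-1,0,\dots,0\}$ and $\lambda_n(M)=0$), the equality clause of Lemma~\ref{lem2.1} is ruled out because the $0$-eigenspace of $M$ consists of vectors vanishing on the two ends of $e$, whereas the (simple) least eigenspace of $A_\alpha(K_{1,n-1})$ is spanned by a vector whose coordinates on all leaves are equal and nonzero; hence $\lambda_n(A_\alpha(K_{1,n-1}+e))>\lambda_n(A_\alpha(K_{1,n-1}))$.

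\textbf{Set-up for trees.} Since $T$ is bipartite, $A_\alpha(T)$ is conjugate by a $\{\pm1\}$-diagonal matrix to $N(T):=\alpha D(T)-(1-\alpha)A(T)=(2\alpha-1)D(T)+(1-\alpha)L(T)$, hence cospectral; put $\mu:=\lambda_n(A_\alpha(T))=\lambda_{\min}(N(T))$. By Lemma~\ref{le1+}, $\mu\ge2\alpha-1>0$, so $N(T)$ is a symmetric $Z$-matrix that is positive definite, i.e.\ a symmetric $M$-matrix, and for connected $T$ its inverse is entrywise positive; therefore the $\mu$-eigenvector $Y=(y_v)$ of $N(T)$ is simple and may be taken strictly positive. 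Next, a short computation shows $\lambda^*:=\lambda_n(A_\alpha(K_{1,n-1}))<\frac{2\alpha-1}{\alpha}$: indeed $\lambda^*$ is the smaller root of $x^2-\alpha nx+(n-1)(2\alpha-1)$, and this quadratic equals $-\frac{(2\alpha-1)(1-\alpha)^2}{\alpha^2}<0$ at $x=\frac{2\alpha-1}{\alpha}$. We may therefore assume $\mu<\frac{2\alpha-1}{\alpha}$ (otherwise $\mu\ge\frac{2\alpha-1}{\alpha}>\lambda^*$ and we are done). Under this assumption the equations $(\alpha d_v-\mu)y_v=(1-\alpha)\sum_{w\sim v}y_w$ force $Y$ to attain its maximum only at leaves of $T$ (since $\mu<\frac{2\alpha-1}{\alpha}\le(2\alpha-1)d_v$ for $d_v\ge2$, using $\alpha\ge1/2$), and $y_\ell=\frac{1-\alpha}{\alpha-\mu}\,y_w>y_w$ for every leaf $\ell$ with neighbour $w$ (using $\mu>2\alpha-1$ when $T\not\cong K_2$).

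\textbf{Grafting towards the star.} Suppose $T\ne K_{1,n-1}$. Fix a vertex $o$ of maximum degree; then $o$ has a neighbour $w$ with $d_w\ge2$, say with other neighbours $v_1,\dots,v_s$ ($s=d_w-1$). Let $T'$ arise from $T$ by deleting the edges $wv_i$ and inserting $ov_i$, so that $w$ becomes a pendant at $o$; repeated such branch shifts turn $T$ into $K_{1,n-1}$. Taking $Y$ as a test vector, $\lambda_n(A_\alpha(T'))=\lambda_{\min}(N(T'))\le Y^{t}N(T')Y/Y^{t}Y$, and
$$Y^{t}N(T')Y-Y^{t}N(T)Y=(y_o-y_w)\Big[\alpha s(y_o+y_w)-2(1-\alpha)\sum_{i=1}^{s}y_{v_i}\Big];$$
using the equation at $w$ the bracket becomes $(\alpha(s-2)+2)\,y_o-(\alpha(s+2)-2\mu)\,y_w$. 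One then wants to show, from the structure of $Y$ in the previous step, that this bracket has the sign opposite to $y_o-y_w$, so that the right side is $\le0$ (and $<0$ unless the shift is trivial). Granting this for each shift, iteration yields $\lambda_n(A_\alpha(T))\ge\lambda^*$, and strictly so when $T\ne K_{1,n-1}$, since otherwise $Y$ would remain a $\mu$-eigenvector after a shift, contradicting the fact that shifting a branch changes a degree and an adjacency that the eigenvalue equations see.

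\textbf{Main obstacle.} The decisive point is the sign analysis just described, and it is genuinely delicate: the standard trick of performing \emph{one of the two opposite shifts} along $ow$ does not apply, because if $T_1,T_2$ denote the two results then
$$\big(Y^{t}N(T_1)Y-Y^{t}N(T)Y\big)+\big(Y^{t}N(T_2)Y-Y^{t}N(T)Y\big)=(y_o-y_w)^2\big(\alpha(d_o+d_w)+2(1-\alpha)-2\mu\big)\ge0,$$
so the two moves need not both be downhill; one must instead choose the branch and the receiving vertex with care and split into cases according to which incident vertices are leaves and the relative sizes of their $Y$-entries. An alternative that avoids eigenvectors is to exhibit a strictly positive certificate $x$ with $N(T)x\ge\lambda^*x$ entrywise, i.e.\ $(\alpha d_v-\lambda^*)x_v\ge(1-\alpha)\sum_{w\sim v}x_w$ for all $v$, constructed by a leaf-to-root recursion: the geometric choice with $x$ increasing by the factor $\frac{1-\alpha}{\alpha-\lambda^*}>1$ per level verifies every non-root inequality precisely because $\lambda^*<\frac{2\alpha-1}{\alpha}$, but fails at the root unless $T$ is already a star, so the certificate must be refined to absorb branching; the hard residual case is that of long pendant paths, controlled by the continued fraction $\pi_m=2\alpha-\lambda^*-\frac{(1-\alpha)^2}{\pi_{m-1}}$ (with $\pi_1=\alpha-\lambda^*$), and in the end by a uniform lower bound on the gap between $\lambda_n(A_\alpha(P_n))$ and $\lambda_n(A_\alpha(K_{1,n-1}))$, which tends to $0$ as $n\to\infty$.
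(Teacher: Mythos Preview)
The statement you are attempting to prove is posed in the paper as a \emph{conjecture}; the paper offers no proof, so there is nothing to compare your argument against on the paper's side.

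As for your proposal itself, it is an outline rather than a proof, and you already name the gap. The reduction to trees via Theorem~\ref{thm1} is correct, the cospectrality of $A_\alpha(T)$ with $N(T)=\alpha D(T)-(1-\alpha)A(T)$ for bipartite $T$ is correct, the Perron-type positivity of the least eigenvector of $N(T)$ is correct, and the grafting identity
\[
Y^{t}N(T')Y-Y^{t}N(T)Y=(y_o-y_w)\Big[(\alpha(s-2)+2)\,y_o-(\alpha(s+2)-2\mu)\,y_w\Big]
\]
is computed correctly. The proof breaks precisely where you write ``One then wants to show\ldots'' and ``Granting this for each shift'': you never establish that the bracket and $y_o-y_w$ have opposite signs, and your own computation in the ``Main obstacle'' paragraph shows why the naive symmetric argument fails---the sum of the two opposite shifts is nonnegative, so at least one shift can go the wrong way. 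Nothing in your set-up (maximum of $Y$ at leaves, the inequality $\mu<\tfrac{2\alpha-1}{\alpha}$, or the choice of $o$ of maximum degree) forces the required sign, and the alternative ``certificate'' approach you sketch is likewise admitted to fail at the root for non-stars and to degenerate on long paths. So the central step---that a branch shift toward the maximum-degree vertex does not increase $\lambda_{\min}$---remains unproved, and without it the argument does not go through. In short: the strategy is reasonable and the preparatory lemmas are fine, but the decisive monotonicity under grafting is missing, and the paper itself leaves the statement open.
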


Finally, we will give an upper bound for the smallest $A_{\alpha}$-eigenvalue of a bipartite graph.

\begin{theorem}\label{thm2.3}
Let $G$ be a bipartite graph of order $n$. If $\frac{1}{2}<\alpha<1$, then
$$\lambda_n(A_{\alpha}({G}))\leq \lambda_{n}(A_{\alpha}(K_{\lceil\frac{n}{2}\rceil,\lfloor\frac{n}{2}\rfloor}))$$
with equality if and only if $G\cong K_{\lceil\frac{n}{2}\rceil,\lfloor\frac{n}{2}\rfloor}$.
\end{theorem}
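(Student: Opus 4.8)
The plan is to combine the edge-addition monotonicity of Theorem~\ref{thm1} with the explicit spectrum of the complete bipartite graph $K_{\lceil n/2\rceil,\lfloor n/2\rfloor}$, in analogy with the proof of Theorem~\ref{thm2.4}. First I would observe that any bipartite graph $G$ on $n$ vertices with parts of sizes $a$ and $n-a$ is a spanning subgraph of $K_{a,n-a}$, and that among all such complete bipartite graphs the balanced one $K_{\lceil n/2\rceil,\lfloor n/2\rfloor}$ has the largest least eigenvalue of $A_\alpha$; granting that, since $G+e$ can be reached from $G$ by successively adding the missing edges of $K_{a,n-a}$ (each addition keeping us within a bipartite graph is \emph{not} required — Theorem~\ref{thm1} applies to arbitrary edge additions), repeated application of Theorem~\ref{thm1} gives $\lambda_n(A_\alpha(G))\le\lambda_n(A_\alpha(K_{a,n-a}))\le\lambda_n(A_\alpha(K_{\lceil n/2\rceil,\lfloor n/2\rfloor}))$.

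For the comparison among complete bipartite graphs I would compute $\lambda_n(A_\alpha(K_{a,n-a}))$ explicitly. The matrix $A_\alpha(K_{a,n-a})$ has eigenvalue $\alpha(n-a)$ with multiplicity $a-1$ and $\alpha a$ with multiplicity $n-a-1$ (from the independent-set part of Proposition~\ref{prop4.1}, or directly), and the remaining two eigenvalues are the roots of a quadratic coming from the quotient matrix $\begin{pmatrix}\alpha(n-a) & (1-\alpha)(n-a)\\ (1-\alpha)a & \alpha a\end{pmatrix}$, namely
\[
\lambda=\frac{\alpha n\pm\sqrt{\alpha^2n^2-4(2\alpha-1)a(n-a)}}{2}.
\]
Since $\tfrac12<\alpha<1$ we have $2\alpha-1>0$, so the smaller root $\tfrac12\big(\alpha n-\sqrt{\alpha^2n^2-4(2\alpha-1)a(n-a)}\big)$ is the least eigenvalue of $A_\alpha(K_{a,n-a})$ (one checks it is below both $\alpha a$ and $\alpha(n-a)$). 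This quantity is a decreasing function of the product $a(n-a)$, which is maximized exactly when $\{a,n-a\}=\{\lceil n/2\rceil,\lfloor n/2\rfloor\}$; hence $\lambda_n(A_\alpha(K_{a,n-a}))$ is minimized — i.e.\ the bound is largest — at the balanced complete bipartite graph, which is what we want, and the inequality in the theorem follows.

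For the equality case I would argue as follows. If $G\cong K_{\lceil n/2\rceil,\lfloor n/2\rfloor}$ equality is clear. Conversely, suppose $G$ is bipartite with $\lambda_n(A_\alpha(G))=\lambda_n(A_\alpha(K_{\lceil n/2\rceil,\lfloor n/2\rfloor}))$. With parts of sizes $a,n-a$, the chain above forces $\lambda_n(A_\alpha(G))=\lambda_n(A_\alpha(K_{a,n-a}))=\lambda_n(A_\alpha(K_{\lceil n/2\rceil,\lfloor n/2\rfloor}))$; the last equality and strict monotonicity in $a(n-a)$ force $a=\lceil n/2\rceil$, so $G\subseteq K_{\lceil n/2\rceil,\lfloor n/2\rfloor}$ with the same least $A_\alpha$-eigenvalue. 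It remains to show $G$ cannot be a proper spanning subgraph, i.e.\ that deleting a single edge $e$ from $K_{a,n-a}$ strictly decreases the least eigenvalue; equivalently, by the equality clause of Lemma~\ref{lem2.1} applied to $A_\alpha(K_{a,n-a})=A_\alpha(K_{a,n-a}-e)+M$ with $\mathrm{Sp}(M)=\{1,2\alpha-1,0,\dots,0\}$, equality $\lambda_n(A_\alpha(K_{a,n-a}-e))=\lambda_n(A_\alpha(K_{a,n-a}))$ would require a common eigenvector of $\lambda_n(A_\alpha(K_{a,n-a}-e))$, $\lambda_n(A_\alpha(K_{a,n-a}))$ and $\lambda_n(M)=0$; analyzing the eigenequations of $M$ (which force two coordinates to vanish in the way seen in Case~3 of Lemma~\ref{le1+}) contradicts the fact that the least-eigenvalue eigenvector of the connected graph $K_{a,n-a}$ has no zero entries, or can be contradicted by a short Perron-type / direct computation. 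I expect this last step — ruling out proper spanning subgraphs via the equality case of Weyl–So — to be the main obstacle, since it requires care about which coordinates of the common eigenvector are forced to zero; the alternative is to just compute $\lambda_n(A_\alpha(K_{a,n-a}-e))$ from its quotient matrix and check the strict inequality directly, which is routine but tedious.
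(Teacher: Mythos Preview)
Your overall strategy matches the paper's: use Theorem~\ref{thm1} to pass from $G$ to $K_{a,n-a}$, then compare the complete bipartite graphs via the explicit formula for $\lambda_n(A_\alpha(K_{a,n-a}))$. One slip to fix: the smaller root $\tfrac12\big(\alpha n-\sqrt{\alpha^2n^2-4(2\alpha-1)a(n-a)}\big)$ is an \emph{increasing} function of $a(n-a)$ (as $a(n-a)$ grows the radicand shrinks), so $\lambda_n(A_\alpha(K_{a,n-a}))$ is \emph{maximized} at the balanced partition, not minimized; this is the direction you need for the chain $\lambda_n(A_\alpha(G))\le\lambda_n(A_\alpha(K_{a,n-a}))\le\lambda_n(A_\alpha(K_{\lceil n/2\rceil,\lfloor n/2\rfloor}))$, and the paper argues exactly this way (its Fact~1).

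For the strictness when $G\subsetneq K_{\lceil n/2\rceil,\lfloor n/2\rfloor}$, the paper's route is shorter than either of your proposals. Instead of invoking the equality clause of Weyl--So or computing the spectrum of $K_{\lceil n/2\rceil,\lfloor n/2\rfloor}-e$, the paper takes a unit eigenvector $X$ for $\lambda_n(A_\alpha(K_{\lceil n/2\rceil,\lfloor n/2\rfloor}))$, chooses an edge $uv$ with $x_u^2+x_v^2>0$, and uses the Rayleigh inequality
\[
\lambda_n(A_\alpha(K_{\lceil n/2\rceil,\lfloor n/2\rfloor}-uv))\le X^tA_\alpha(K_{\lceil n/2\rceil,\lfloor n/2\rfloor}-uv)X
=\lambda_n-\big((2\alpha-1)(x_u^2+x_v^2)+(1-\alpha)(x_u+x_v)^2\big)<\lambda_n,
\]
which gives the strict drop in one line. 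Your Weyl--So approach also works (since $MX=0$ forces $x_u=x_v=0$, contradicting that the simple least eigenvalue of the connected graph $K_{\lceil n/2\rceil,\lfloor n/2\rfloor}$ has a nowhere-zero eigenvector), but the Rayleigh argument avoids the simplicity check and the eigenvector analysis entirely.
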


\begin{proof}
Let $G$ be a bipartite graph with partitions $V_{1}$ and $V_{2}$. Suppose that $|V_{1}|=a$ and $|V_{2}|=b$. Without loss of generality, we may assume that $a\geq b$.

\noindent{\bf{Fact 1.}} If $a-b\geq 2$, then $\lambda_{n}(A_{\alpha}(K_{a,b}))<\lambda_{n}(A_{\alpha}(K_{a-1,b+1}))$.

Nikiforov \cite{VN1} showed that $\lambda_n(A_{\alpha}(K_{a,b}))=\frac{1}{2}(\alpha n-\sqrt{\alpha^2 n^2+4ab(1-2\alpha)})$. Then we have
\begin{eqnarray*}
&&\lambda_{n}(A_{\alpha}(K_{a-1,b+1}))-\lambda_{n}(A_{\alpha}(K_{a,b}))\\[3mm]
&=&\frac{1}{2}\big(\sqrt{\alpha^2 n^2+4ab(1-2\alpha)}-\sqrt{\alpha^2 n^2+4(a-1)(b+1)(1-2\alpha)} \big)\\[3mm]
&>&0,
\end{eqnarray*}
as required.

\noindent{\bf{Fact 2.}} $\lambda_{n}(A_{\alpha}(K_{\lceil\frac{n}{2}\rceil,\lfloor\frac{n}{2}\rfloor}-e))<\lambda_{n}(A_{\alpha}(K_{\lceil\frac{n}{2}\rceil,\lfloor\frac{n}{2}\rfloor}))$.

For convenience, let $H\cong K_{\lceil\frac{n}{2}\rceil,\lfloor\frac{n}{2}\rfloor}$. Suppose that $x$ is a unit eigenvector of $A_{\alpha}(H)$ corresponding to $\lambda_{n}(A_{\alpha}(H))$. Hence,
\begin{eqnarray*}
\lambda_{n}(A_{\alpha}(H))&=&\langle A_{\alpha}(H)x,x\rangle\\
&=&(2\alpha-1)\sum_{uv\in E(H)}(x(u)^{2}+x(v)^{2})+(1-\alpha)\sum_{uv\in E(H)}(x(u)+x(v))^{2}.
\end{eqnarray*}
Since $\lambda_{n}(A_{\alpha}(H))>0$, there exists an edge $e=uv$ such that $(2\alpha-1)(x(u)^{2}+x(v)^{2})+(1-\alpha)(x(u)+x(v))^{2}>0$. Therefore,
\begin{eqnarray*}
&&\lambda_n(A_{\alpha}(H-uv))-\lambda_n(A_{\alpha}(H))\\
&\leq& \langle A_{\alpha}(H-uv)x,x\rangle-\langle A_{\alpha}(H)x,x\rangle \\
&=&-(2\alpha-1)(x(u)^{2}+x(v)^{2})-(1-\alpha)(x(u)+x(v))^{2}\\
&<&0. \ \
\end{eqnarray*}
This completes the proof of Fact 2.

According to Fact 1, Fact 2 and Proposition \ref{prop1}, we can see that:
if $a-b\geq 2$, then $\lambda_{n}(G)\leq\lambda_{n}(A_{\alpha}(K_{a,b}))<\lambda_{n}(A_{\alpha}(K_{\lceil\frac{n}{2}\rceil,\lfloor\frac{n}{2}\rfloor}))$; if $a-b\leq 1$ and $G\ncong K_{\lceil\frac{n}{2}\rceil,\lfloor\frac{n}{2}\rfloor}$, then $\lambda_{n}(G)\leq\lambda_{n}(A_{\alpha}(K_{\lceil\frac{n}{2}\rceil,\lfloor\frac{n}{2}\rfloor}-e))<\lambda_{n}(A_{\alpha}(K_{\lceil\frac{n}{2}\rceil,\lfloor\frac{n}{2}\rfloor}))$. Thus we complete the proof.\hspace*{\fill}$\Box$
\end{proof}

\noindent{\textbf{Acknowledgement.}} The authors would like to express their gratitude to the anonymous reviewers for their kind suggestions on the original manuscript. The first author was supported by the National Natural Science Foundation of China (No. 11401211) and Fundamental Research Funds for the Central Universities (No. 222201714049). The third author was supported by the National Natural Science Foundation of China (No. 11471121).

\small {

}

\end{document}